\newtheorem{theorem}{Theorem}
\newtheorem{lemma}[theorem]{Lemma}
\newtheorem{q}[theorem]{Question}
\newtheorem{conj}[theorem]{Conjecture}
\def\A{\mathcal{A}}
\def\B{\mathcal{B}}
\def\U{\mathcal{U}}
\def\C{\mathcal{C}}
\def\T{\mathcal{T}}
\def\eps{\varepsilon}
\def\P{\mathbb{P}}
\def\ps{\mathcal{P}(X)}
\begin{document}

\title{Set Systems Containing Many Maximal Chains} 
\author{J.~Robert Johnson\thanks{School of Mathematical Sciences, Queen Mary University of London, London E1 4NS}, Imre Leader\thanks{Department of Pure Mathematics and Mathematical Statistics, Centre for Mathematical Sciences, Wilberforce Road, Cambridge CB3 0WB} and Paul Russell$^\dagger$} 

\maketitle

The purpose of this note is to raise an extremal question on set systems (that is, subsets of the power set $\ps$, where $X=\{1,2,\dots,n\}$) which seems to be natural and appealing. Our question is: which set systems of a given size maximise the number of $(n+1)$-element chains in $\ps$? We refer to these chains as \emph{maximal chains} -- we emphasise that they are maximal in $\ps$ rather than just maximal within the set system.

\begin{q}\label{chain:q}
For given $|\A|$, which $\A\subseteq\ps$ contains the most maximal chains?
\end{q}

For $\A\subseteq\ps$ we denote the number of maximal chains in $\A$ by $c(\A)$. We will show that for each fixed $\alpha>0$ we have
\[
\max\{c(\A): |A|=\alpha 2^n\}=(\alpha+o(1))\;n!
\]
For smaller set systems we are unable to answer the question. We conjecture that a `tower of cubes' construction (defined later) is extremal. We finish by mentioning briefly a connection to an extremal problem on posets and a variant of our question for the grid graph.

This question does not appear to have been asked before, although there has been work on the problem of maximising the number of $k$-element chains for smaller $k$. For $k=2$ (that is, maximising the number of comparable pairs) Alon and Frankl \cite{AF} proved that a tower of cubes is approximately extremal. They also proved a similar but less exact result for arbitrary $k$ (with $k$ fixed and $n$ large). By contrast, in this note we are interested in the case when $k$ is as large as it can be.  

We note briefly that the question of \emph{minimising} the number of $k$-element chains has also been studied. In this direction Kleitman \cite{K} proved the following quantitative extension of Sperner's theorem: the minimum number of comparable pairs is achieved by a union of middle layers of $\ps$. He also conjectured that the same construction minimises the number of $k$ element chains for any $k$ -- this problem remains open. See also the recent work of Dove, Griggs, Kang and Sereni \cite{DGJS}.

Our question also has some similarity with a problem on `most probably intersecting systems' posed by Katona, Katona and Katona \cite{KKK}.
Answering their question, Russell \cite{R} determined (approximately) which set systems of a given size contain the maximum number of intersecting families of size $k$.

It is easy to show that a family containing a fixed proportion of all sets can contain no more than the same fixed proportion of all maximal chains (this is the content of the next easy lemma). It turns out that this is asymptotically best possible, although the construction proving this is perhaps unexpected. 

\begin{lemma}
If $\A\subseteq\ps$ with $|\A|=\alpha 2^n$ then $c(\A)\leq\alpha \; n!$\;.
\end{lemma}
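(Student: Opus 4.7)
The plan is an averaging argument comparing the probability that a uniformly random maximal chain lies in $\A$ with the probability that a uniformly random set in $\ps$ lies in $\A$. These probabilities are $c(\A)/n!$ and $|\A|/2^n=\alpha$ respectively, so the lemma is equivalent to the former being at most the latter.

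To exhibit this I would set up a coupling. Pick a uniformly random permutation $\sigma$ of $X$ and form the chain $C^\sigma=\{\emptyset,\{\sigma(1)\},\{\sigma(1),\sigma(2)\},\ldots,X\}$, so that $C^\sigma$ is a uniform random maximal chain in $\ps$. Independently pick a level $k\in\{0,1,\ldots,n\}$ with $\P(k=j)=\binom{n}{j}/2^n$, and let $S=\{\sigma(1),\ldots,\sigma(k)\}$ denote the $k$-element set on $C^\sigma$. The key (and essentially only) step is to check that $S$ is then distributed uniformly on $\ps$: conditioned on $k=j$, $S$ is uniform on the $j$-element subsets of $X$, and the weights $\binom{n}{j}/2^n$ are exactly those needed to turn this into the uniform distribution on $\ps$.

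Once this is in place the lemma is immediate: if $C^\sigma\subseteq\A$ then $S\in\A$ regardless of the value of $k$, so
\[
\frac{c(\A)}{n!}=\P(C^\sigma\subseteq\A)\leq\P(S\in\A)=\frac{|\A|}{2^n}=\alpha.
\]
Equivalently, one can phrase the whole thing as a weighted double count of pairs $(C,S)$ with $C$ a maximal chain in $\A$ and $S\in C$, giving each pair the weight $\binom{n}{|S|}/2^n$: the chain-first sum telescopes to $c(\A)$, while the set-first sum is at most $\sum_{S\in\A}|S|!(n-|S|)!\binom{n}{|S|}/2^n=\alpha\,n!$. There is no real obstacle; the only small piece of cleverness is mixing the levels with the binomial weights so that the set at a random level of a uniform random chain becomes a uniform random subset of $X$.
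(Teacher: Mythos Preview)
Your argument is correct. It is at heart the same averaging idea as the paper's, but packaged differently. The paper does not couple over all layers; instead it observes that since $\sum_r |\A\cap X^{(r)}|=\alpha 2^n=\sum_r \alpha\binom{n}{r}$, there must be a single level $r$ with $|\A\cap X^{(r)}|\le \alpha\binom{n}{r}$, and then bounds $c(\A)$ by the number of maximal chains through that one layer. Your version replaces ``take the sparsest layer'' by ``take a random layer with binomial weights'', which avoids singling out any $r$ and goes straight to $c(\A)/n!\le\alpha$. The paper's route, on the other hand, yields the slightly sharper intermediate statement $c(\A)/n!\le\min_r |\A\cap X^{(r)}|/\binom{n}{r}$, which your coupling does not give directly. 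Either way the proof is a two-line averaging; no extra cleverness is needed.
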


\begin{proof}
If $|\A|=\alpha 2^n$ then by averaging there is some $r$ with $|\A\cap X^{(r)}|\leq \alpha\binom{n}{r}$. The number of maximal chains in $\ps$ that contain a point from $\A\cap X^{(r)}$ is therefore at most $\alpha \; n!$\;.
\end{proof}

To show that this simple bound is asymptotically tight it will be convenient to formulate the question slightly differently. Maximal chains in $\ps$ are in one-to-one correspondence with permutations of $X$, with the permutation $a_1a_2\dots a_n$ corresponding to the chain
\[
\emptyset,\{a_1\},\{a_1,a_2\},\dots,X.
\]
For a permutation $\sigma=a_1a_2\dots a_n\in S_n$ we denote by $I(\sigma)$ the set of subsets of $X$ that lie in the maximal chain in $\ps$ corresponding to the permutation $\sigma$ (that is, those of the form $\{a_1,a_2,\dots,a_r\}$). In
this language, Question \ref{chain:q} is then equivalent to the following
question. 

\begin{q}\label{perm:q}
For given $|\B|$, which $\B\subseteq S_n$ minimises $\left|\bigcup_{\sigma\in\B} I(\sigma)\right|$?
\end{q}

We now show that the bound in Lemma 2 is asymptotically correct.

\begin{theorem}\label{main}
For any $0<\alpha<1$ and $\eps>0$ there exists $n_0$ such that for $n>n_0$ there is a family $\A\subseteq\ps$ with $|\A|\leq(\alpha+\eps)2^n$ and $c(\A)\geq\alpha \; n!$\;.
\end{theorem}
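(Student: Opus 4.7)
My plan is to pass to the permutation formulation (Question~\ref{perm:q}), so that we want $\B\subseteq S_n$ with $|\B|\geq\alpha n!$ and $\bigl\lvert\bigcup_{\sigma\in\B}I(\sigma)\bigr\rvert\leq(\alpha+\eps)2^n$; taking $\A:=\bigcup_{\sigma\in\B}I(\sigma)$ then satisfies both $|\A|\leq(\alpha+\eps)2^n$ and $c(\A)\geq|\B|\geq\alpha n!$, as required.

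The first remark is that a purely random $\B$ is hopeless. Each set $S\subseteq X$ lies on $|S|!(n-|S|)!\geq n!/\binom{n}{\lfloor n/2\rfloor}$ maximal chains, an exponentially large number, so the probability that $S$ is a prefix of \emph{none} of the permutations in a random $\alpha n!$-subset of $S_n$ is essentially $(1-\alpha)^{|S|!(n-|S|)!}$, which tends to zero for every $S$; this forces $|\A|\to 2^n$. We therefore need a highly structured $\B$ so that its permutations share many prefix sets. A natural candidate is to prescribe the initial segment: fix an integer $k$ and a family $\mathcal{Y}\subseteq X^{(k)}$ with $|\mathcal{Y}|=\alpha\binom{n}{k}$, and put $\B=\{\sigma\in S_n : \{\sigma(1),\dots,\sigma(k)\}\in\mathcal{Y}\}$. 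Then $|\B|=|\mathcal{Y}|\cdot k!(n-k)!=\alpha n!$ exactly, and $\A$ can be described cleanly as the union of the downward shadow of $\mathcal{Y}$ (at levels $\le k$) with its upward shade (at levels $\ge k$); the task then reduces to choosing $k$ and $\mathcal{Y}$ so that both pieces have total size about $\alpha\cdot 2^{n-1}$.

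The main obstacle is the choice of $\mathcal{Y}$: a random $\mathcal{Y}$ of density $\alpha$ yields shadow and shade of density tending to $1$, well above the target. A useful reformulation is a \emph{lifting}: if $\A_0\subseteq\mathcal{P}([n_0])$ has chain-density $\alpha$ and set-density $\beta$, then for any $n\geq n_0$ the family $\A=\{S\subseteq [n] : S\cap[n_0]\in\A_0\}$ satisfies $|\A|=|\A_0|\cdot 2^{n-n_0}$ and $c(\A)=c(\A_0)\cdot n!/n_0!$ (the latter because a maximal chain of $\mathcal{P}([n])$ lies in $\A$ iff its trace on $[n_0]$ is a chain of $\A_0$, and each chain of $\A_0$ is the trace of exactly $n!/n_0!$ chains of $\mathcal{P}([n])$). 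Hence the densities are preserved exactly, and it suffices to exhibit, for every $\eps>0$, a single primitive $\A_0$ on some fixed $n_0=n_0(\eps)$ with the required densities; the lifted family then handles all $n>n_0$. Producing such a primitive—likely by choosing $\mathcal{Y}$ with extra structure (a junta-like or product family supported on a ``working set'', perhaps combined with averaging over a random translate to control the shadow and shade)—is where I expect the real difficulty to lie.
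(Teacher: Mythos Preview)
Your proposal is not a proof: you reduce correctly to the permutation formulation and make a valid observation about lifting, but you explicitly leave the construction of the ``primitive'' $\A_0$ undone, and that is the entire content of the theorem. Indeed, your lifting reduction is circular. You observe (correctly) that if some $\A_0\subseteq\mathcal{P}([n_0])$ has chain-density $\geq\alpha$ and set-density $\leq\alpha+\eps$, then the lifted family $\{S\subseteq[n]:S\cap[n_0]\in\A_0\}$ has the same densities for every $n\geq n_0$. But the existence of such an $\A_0$ for \emph{some} $n_0$ is exactly the statement of the theorem; you have not made it any easier.

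Worse, the natural candidate primitive fails. A junta-type family on a small coordinate set $[k]$ that has nontrivial set-density is (without loss) an up-set $\U\subsetneq\mathcal{P}([k])$ not containing $\emptyset$, and then $c(\U)=0$: no maximal chain lies in $\U$, so its lift also contains no maximal chain. The paper's key idea is precisely how to repair this. One takes such an up-set $\U$ with $|\U|=\beta 2^k$ for some $\beta$ with $\alpha<\beta\leq\alpha+\eps/2$, and defines $\B_p$ to be the set of permutations whose first $\lfloor pn\rfloor$ entries meet $[k]$ in an element of $\U$. Then $\bigcup_{\sigma\in\B_p}I(\sigma)$ is contained in $\{S:S\cap[k]\in\U\}\cup\{S:|S|\leq pn\}$, which has size $\beta 2^n+o(2^n)$ provided $p<\tfrac12$. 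The density $|\B_p|/n!$ is, up to $O(1/n)$, the polynomial $f(p)=\sum_{Y\in\U}p^{|Y|}(1-p)^{k-|Y|}$, which is increasing (since $\U$ is an up-set) and satisfies $f(\tfrac12)=\beta>\alpha$; by continuity one may choose $p=\tfrac12-c_k<\tfrac12$ with $f(p)=\alpha$. This is exactly the ``junta-like $\mathcal{Y}$'' you gesture at, but the two ingredients you are missing are the up-set structure (giving monotonicity and the value $\beta$ at $p=\tfrac12$) and the small-set padding below level $pn$ (which restores the chains that the pure lift destroys, at negligible cost because $p<\tfrac12$).
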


\begin{proof}
It will be tidier to prove this by constructing, for any $\eps>0$, a family $\B$ of at least $(\alpha-\eps) \; n!$ permutations such that
\[
\Big|\bigcup_{\sigma\in\B} I(\sigma)\Big|\leq(\alpha+\eps)2^n.
\]
This clearly implies the result.

Given $\eps>0$, choose $k$ so that $\frac{1}{2^k}<\frac{\eps}{2}$. Let $\U$ be an up-set in $\mathcal{P}(\{1,\dots,k\})$ with $|\U|=\beta 2^k$ where $\alpha<\beta \leq\alpha+\frac{\eps}{2}$. Let $\B_p$ be the set of all permutations of $X$ for which the set of elements of $[k]$ appearing in the first $pn$ positions of the permutation is an element of $\U$. We will show that there is a $p$ satisfying $p\leq\frac{1}{2}-c_k$, where $c_k$ is a positive constant depending on $k$ but not on $n$, with $|\B_p|\geq(\alpha-\eps)\;n!$\;. Since any set $S$ of size greater than $pn$ is in $\bigcup_{\sigma\in\B_p} I(\sigma)$ if and only if $S\cap[k]\in\U$, it will follow that 
\[
\Big|\bigcup_{\sigma\in\B_p} I(\sigma)\Big|\leq|\{S: S\cap[k]\in\U\}|+|\{S: |S|\leq pn\}|=\beta 2^n+\sum_{i=0}^{pn}\binom{n}{i}<(\alpha+\eps)2^n.
\]

It remains to prove that such a $p$ exists. The probability that a randomly chosen permutation $\sigma$ is in $\B_p$ is
\[
\P(\sigma\in\B_p)=(1-O(1/n))f(p)
\]
where $f(p)=\sum_{X\in\U}p^{|X|}(1-p)^{k-|X|}$. The polynomial $f(p)$ does not involve $n$, is increasing in $p$ (since $\U$ is an up-set), and satisfies $f(\frac{1}{2})=\beta$. It follows that there is some positive constant $c_k$ with $f(\frac{1}{2}-c_k)=\alpha$ and that if we set $p=\frac{1}{2}-c_k$ then 
\[
\P(\sigma\in\B_p)\rightarrow\alpha.
\]
So for $n$ sufficiently large we have that $|\B_p|\geq(\alpha-\eps)\;n!$\;, as required.

\end{proof}

When $|\A|=o(2^n)$ the situation seems to be more complicated. We now describe a construction of a set system that we believe is a plausible candidate for being extremal.

Suppose that $n=tk$ where $t,k\in\mathbb{N}$. Let $X_i=\{(i-1)t+1,(i-1)t+2,\dots,it\}$ for $1\leq i\leq k$. and define the \emph{tower of $t$-cubes} set system by
\[
\T_t=\{A:(X_1\cup\dots\cup X_s)\subseteq A\subseteq(X_1\cup\dots\cup X_{s+1}) \text { for some } 0\leq s\leq k-1\}.
\]
Thus $|\T_t|=\frac{n}{t}2^{t}-\frac{n}{t}+1$ and $c(\T_k)=(t!)^{n/k}$.

This construction is easily seen to be extremal when $t=2$. Indeed, if we take a fixed maximal chain $\C$ in $\A$ and $X\subseteq(\A\setminus\C)$ then there is at most one maximal chain $\C'\subseteq\A$ with $\C'\setminus\C=X$. It follows that $c(\A)\leq 2^{|\A|-(n+1)}$. 

We conjecture that this construction is extremal for all $t$. 

\begin{conj}\label{tower:conj}
If $|\A|=|\T_t|$ then $c(\A)\leq c(\T_t)$.
\end{conj}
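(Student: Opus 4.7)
The plan is to combine a compression step with an entropy bound, extending the clean argument that handles $t=2$. First, I would try to show by a Kruskal--Katona-style compression that among families of size $|\T_t|$ maximising $c(\A)$ one may restrict to those with some canonical structure---for example, whose level sets $\A\cap X^{(r)}$ are initial segments of colex, or which are stable under coordinate shifts that preserve both $|\A|$ and $c(\A)$. A good shift needs checking because, unlike for shadow-type problems, $c(\A)$ can genuinely decrease under some moves; identifying a safe family of shifts is the first technical task. The payoff would be to narrow the space of potential extremisers down to something combinatorially tractable.

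For a compressed $\A$, the second step is an entropy bound. Letting $(\emptyset=A_0\subset A_1\subset\cdots\subset A_n=X)$ be a uniformly random maximal chain in $\A$, we have
\[
\log c(\A)=\sum_{i=0}^{n-1}H(A_{i+1}\mid A_i)\leq\sum_{i=0}^{n-1}\mathbb{E}\log d^+_i(A_i),
\]
where $d^+_i(A)$ counts the sets $B\in\A\cap X^{(i+1)}$ with $A\subset B$. The aim is then to show that, subject to $\sum_r|\A\cap X^{(r)}|=|\T_t|$, the right-hand side is at most $k\log t!$, with equality only for the tower.

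The main obstacle, and the reason I expect the conjecture to be genuinely hard, is that $\T_t$ owes its extremality to a delicate ``staircase'' of level sizes (single sets at heights $st$, full $t$-cubes in between), whereas naive convexity bounds on the level-size vector $(m_0,\dots,m_n)$ are very unlikely to be tight: there is generally lots of room to trade up-degrees between levels in ways that preserve $|\A|$ but need not preserve $c(\A)$. A plausible refinement is induction on $k$: locate a set $S\in\A$ of size a multiple of $t$ through which every maximal chain of $\A$ passes (a ``neck''), split $\A$ at $S$ into its pieces above and below, and apply the inductive hypothesis to each. The serious difficulty is that in an arbitrary $\A$ no such neck need exist; finding one, or manufacturing one via compression, is where I expect the argument to require a genuinely new idea. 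Already the weak bound $c(\A)\leq 2^{|\A|-(n+1)}$ used for $t=2$ is off by an exponential factor for $t\geq 3$, so any proof must exploit the nested structure much more carefully than just the size of $\A\setminus\C$.

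As a sanity check before committing to a strategy, I would verify the conjecture by hand (or by computer) in the smallest non-trivial case $t=3$, $k=2$, where $n=6$, $|\A|=15$ and the target bound is $c(\A)\leq 36$. This would clarify whether the tower is the unique extremiser up to relabelling of $X$, and whether a cleaner structural principle suggests itself for the induction.
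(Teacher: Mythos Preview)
The statement you are attempting is labelled a \emph{conjecture} in the paper and is not proved there; the authors explicitly leave it open, singling out even the smallest cases $t=3$ and $t=n/2$ as ``particularly appealing'' unresolved instances. There is therefore no proof in the paper against which to compare your proposal.

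Your write-up is not a proof either, and you are candid about this: it is a research plan with the key steps left as obstacles (no safe family of shifts identified beyond the standard one, no way to bound the entropy sum by $k\log t!$, no guarantee that a neck exists for the induction). On the compression front, the paper does establish that ordinary $ij$-compressions never decrease $c(\A)$, so one may assume $\A$ is left-compressed---but the authors immediately add that ``it seems hard to make good use of this assumption'', which matches your own diagnosis. Your entropy bound and neck-induction ideas go beyond anything the paper attempts and may be worth pursuing, but the difficulties you flag are genuine and currently unresolved. In short, your assessment that the conjecture is ``genuinely hard'' agrees with the paper's position: this is an open problem, and what you have written is a reasonable plan of attack rather than a proof.
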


The extreme cases $t=3$ and $t=n/2$ are both particularly appealing.

Of course the `tower of cubes' construction only gives set systems of particular sizes. We can generalise the construction to include a wider range of possible sizes by allowing cubes of dimension $t$ and $t+1$. We say that a family $\T$ is a \emph{generalised tower of cubes} if it is of the form:
\[
\T=\{A:(X_1\cup\dots\cup X_s)\subseteq A\subseteq(X_1\cup\dots\cup X_{s+1}) \text { for some } 0\leq s\leq k-1\}
\]
where $X_1,\dots,X_k$ are pairwise disjoint subsets of $X$ whose union is $X$ and with $|X_i|=t$ or $t+1$ for all $i$. 

If $n=a(t-1)+bt$ then there is a generalised tower of cubes with $|\T|=a(2^t-1)+b(2^{t+1}-1)+1$ and $c(\T)=t!^a(t+1)!^b$. We conjecture that, for values of $|\A|$ which allow it, a construction of this form is extremal. 

\begin{conj}\label{gen-tower:conj}
If $|\A|=|\T|$, where $\T$ is a generalised tower of cubes, then $c(\A)\leq c(\T)$.
\end{conj}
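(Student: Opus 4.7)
My plan is to attack Conjecture~\ref{gen-tower:conj} by induction on $n$, with a slice at the level of the first block as the main reduction. Since $c(\A)=0$ unless $\emptyset,X\in\A$, we may assume both sets lie in $\A$.

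For the inductive step, fix $r=|X_1|$. Every maximal chain in $\ps$ passes through a unique set of $X^{(r)}$, so writing $d(S)$ for the number of chains of $\A$ from $\emptyset$ up to $S$ and $u(S)$ for the number from $S$ up to $X$, we have
\[
c(\A)=\sum_{S\in\A\cap X^{(r)}}d(S)\,u(S).
\]
The induction hypothesis, applied to $\A\cap\mathcal{P}(S)$ as a family on $S$ and to $\{T\setminus S:T\in\A,\,T\supseteq S\}$ as a family on $X\setminus S$, bounds $d(S)$ and $u(S)$ in terms of $|\A\cap\mathcal{P}(S)|$ and $|\{T\in\A:T\supseteq S\}|$ respectively; here one should allow any family size, using the best bound afforded by a generalised tower fitting inside that size. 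Combined with
\[
\sum_{S\in\A\cap X^{(r)}}\bigl(|\A\cap\mathcal{P}(S)|+|\{T\in\A:T\supseteq S\}|\bigr)\le|\A|+|\A\cap X^{(r)}|,
\]
the conjecture would reduce to an optimisation over the level profile $\bigl(|\A\cap X^{(i)}|\bigr)_i$ that should be maximised by concentrating $\A$ at a single $S_0\in X^{(r)}$ with $\A\cap\mathcal{P}(S_0)=\mathcal{P}(S_0)$.

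The principal obstacle is this concentration step: showing that nothing is gained by spreading $\A$ across several sets of $X^{(r)}$ rather than concentrating it at one. The natural tool is a compression argument, but an $ij$-shift both creates and destroys maximal chains, so monotonicity of $c(\A)$ under shifting is not automatic. Identifying a shift --- perhaps a block-respecting variant, or a ``glueing'' operation that identifies two near-copies of the same $t$-cube --- under which $c(\A)$ is provably monotone appears to be the essential technical step, and is where the case $t\ge 3$ of Conjecture~\ref{tower:conj} ultimately stands or falls. Should compression prove intractable, an alternative is to pass to the permutation formulation of Question~\ref{perm:q} and seek a Shearer-type entropy inequality for $|\bigcup_{\sigma\in\B}I(\sigma)|$, whose extremum is naturally a product distribution over blocks; but the fact that the quantity to be bounded is a union rather than a sum again introduces serious difficulties.
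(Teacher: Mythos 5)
The statement you are addressing is stated in the paper as Conjecture~\ref{gen-tower:conj}: it is an open problem, the paper contains no proof of it, and indeed the authors say they lack even a conjectural extremal family for sizes not of generalised-tower form. So the only question is whether your argument is internally sound, and it is not: by your own admission the ``concentration step'' is unresolved, and that step is essentially the whole content of the conjecture. Beyond that acknowledged gap, two concrete steps in your sketch fail. First, the induction has no statement to induct on. To bound $d(S)$ and $u(S)$ you need an extremal bound on $c$ for families of \emph{arbitrary} size on ground sets of size $r$ and $n-r$, whereas the conjecture speaks only of the special sizes $a(2^t-1)+b(2^{t+1}-1)+1$. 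The paper explicitly observes that for intermediate sizes (e.g.\ between $2^{n/2}$ and $\alpha 2^n$) a hybrid of a tower with the construction of Theorem~\ref{main} is a competitor and no extremal candidate is known, so ``the best bound afforded by a generalised tower fitting inside that size'' is neither well defined nor plausibly the truth; an induction resting on it cannot close.

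Second, the displayed inequality
\[
\sum_{S\in\A\cap X^{(r)}}\bigl(|\A\cap\mathcal{P}(S)|+|\{T\in\A:T\supseteq S\}|\bigr)\le|\A|+|\A\cap X^{(r)}|
\]
is false. A set $A\in\A$ with $|A|<r$ is counted once for \emph{every} $S\in\A\cap X^{(r)}$ containing it (the empty set alone contributes $|\A\cap X^{(r)}|$ to the left-hand side), and likewise a set above level $r$ is counted once for every such $S$ it contains. Taking $\A=\ps$ and $r=1$, the left-hand side is $2n+n2^{n-1}$ while the right-hand side is $2^n+n$, so the inequality fails for all $n\ge 3$. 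Without it your reduction to an optimisation over the level profile collapses, and the ``spread versus concentrate'' dichotomy is not even correctly set up. The one ingredient of your plan that the paper does validate is the use of $ij$-compressions: the paper proves that $c(\A)$ does not decrease under $C_{ij}$, but also remarks that it seems hard to exploit left-compressedness beyond very restricted cases (such as two sets per layer). Your instinct that a block-respecting or glueing compression is the missing tool may well be right, but as it stands your submission is a research programme with a known-to-be-open core, not a proof.
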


We do not even have a good conjecture for the extremal set systems when $|\A|$ does not allow a `generalised tower of cubes' construction.
In particular, we have no idea what happens when $|\A|$ is between $2^{n/2}$ and $\alpha2^n$. In general, the situation may be quite complicated: for example, when $|\A|=2\alpha2^{n/2}$ we could take a tower of cubes comprising two cubes each of dimension $n/2$, and replace each of these by an $\alpha2^{n/2}$-size family as constructed in Theorem \ref{main}.

We briefly mention that a first step in proving extremal results is often to apply some kind of compression operation. See \cite{BB} for numerous examples of this and background material on compressions. Standard $ij$-compressions can be used to simplify our set system without decreasing the number of maximal chains, as we now describe. 

Let $C_{ij}$ denote the $ij$-compression operation. That is for a set $A\in\ps$,
\[
C_{ij}(A)=\left\{\begin{array}{ll}
A\setminus\{j\}\cup\{i\} & \text{ if } i\not\in A, j\in A,\\
A & \text{ otherwise}.\end{array}
\right.
\]
and for a set system $\A\subseteq\ps$,
\[
C_{ij}(\A)=\{C_{ij}(A):A\in\A\}\cup\{A:A\in\A,C_{ij}(A)\in\A\}.
\]

\begin{lemma}
If $\A\subseteq\ps$ then the number of maximal chains in $C_{ij}(\A)$ is at least the number of maximal chains in $\A$.
\end{lemma}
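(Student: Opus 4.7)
The plan is to pair up maximal chains via the transposition $\tau=(i\,j)$ acting setwise, and then prove the inequality pair-by-pair. For a maximal chain $\mathcal{C}=(A_0,\dots,A_n)$, let $\tau(\mathcal{C})=(\tau(A_0),\dots,\tau(A_n))$. If $i$ is added at position $p_i$ and $j$ at position $p_j$ in $\mathcal{C}$, then $\tau(\mathcal{C})$ is the chain that agrees with $\mathcal{C}$ outside the interval of positions strictly between $\min(p_i,p_j)$ and $\max(p_i,p_j)$, but with the roles of $i$ and $j$ swapped inside it. In particular $\tau(\mathcal{C})\neq\mathcal{C}$, so maximal chains split into pairs $\{\mathcal{C},\tau(\mathcal{C})\}$, exactly one member of each pair putting $i$ before $j$.

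The main claim to verify is that for each such pair,
\[
|\{\mathcal{C},\tau(\mathcal{C})\}\cap C_{ij}(\mathcal{A})|\;\geq\;|\{\mathcal{C},\tau(\mathcal{C})\}\cap\mathcal{A}|.
\]
Summing this over all pairs gives the lemma. To prove the claim I would use a direct case analysis. The sets in a chain split into three types: those containing neither of $i,j$, those containing both, and those containing exactly one. Sets of the first two types are fixed by $C_{ij}$ on individual elements, and lie in $C_{ij}(\mathcal{A})$ whenever they lie in $\mathcal{A}$. The only sets that can be affected are those containing exactly one of $i,j$, and these occur precisely in the positions strictly between $\min(p_i,p_j)$ and $\max(p_i,p_j)$, paired off across $\mathcal{C}$ and $\tau(\mathcal{C})$.

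With this in hand the case analysis is short. If both chains of the pair lie in $\mathcal{A}$, then each offending set $A$ has $C_{ij}(A)\in\mathcal{A}$, so both chains remain in $C_{ij}(\mathcal{A})$. If exactly one chain is in $\mathcal{A}$, consider which member it is. If it is the chain whose offending sets already contain $i$ (not $j$), then these sets are fixed by $C_{ij}$ and remain in $C_{ij}(\mathcal{A})$ via the clause $\{A\in\mathcal{A}:C_{ij}(A)\in\mathcal{A}\}\cup\{C_{ij}(A):A\in\mathcal{A}\}$, so that same chain survives. If it is the chain whose offending sets contain $j$ (not $i$), then each such set $A$ satisfies $C_{ij}(A)\in\{C_{ij}(B):B\in\mathcal{A}\}\subseteq C_{ij}(\mathcal{A})$, and the partner chain $\tau(\mathcal{C})$ consists precisely of these images together with the common fixed sets, so $\tau(\mathcal{C})\in C_{ij}(\mathcal{A})$.

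The argument has no real obstacle; the only point requiring care is to use the correct clause of the definition of $C_{ij}(\mathcal{A})$ in each subcase, and to note that the chain-pairing via $\tau$ makes a global injection unnecessary since the per-pair inequality is strong enough.
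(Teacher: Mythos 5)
Your proof is correct and is essentially the paper's argument: the paper phrases it as an injection from chains of $\A$ into chains of $C_{ij}(\A)$ (a chain with $i$ before $j$ maps to itself; one with $j$ before $i$ maps to its $(i\,j)$-swap unless both lie in $\A$, in which case both survive), which is exactly your per-pair inequality over the orbits of $\tau$. One small nitpick: the sets on which the two chains of a pair differ occupy positions from $\min(p_i,p_j)$ inclusive up to $\max(p_i,p_j)$ exclusive rather than the open interval, but since you also characterise them correctly as the sets containing exactly one of $i,j$, nothing in the argument is affected.
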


\begin{proof}
Let $\A'=C_{ij}(\A)$. Suppose that the chain corresponding to the permutation $\sigma=a_1\dots,a_n$ is in $\A$. If $i$ comes before $j$ in $\sigma$ then this chain is present in $\A'$. If $j$ comes before $i$ in $\sigma$ then either $\sigma'$, the chain corresponding to $\sigma$ with $i$ and $j$ exchanged, is in $\A'$ but not $\A$ or both $\sigma$ and $\sigma'$ are in $\A$ and $\A'$.  
\end{proof}

Applying $ij$-compressions repeatedly with $i<j$ allows us to assume that our set system is left-compressed: we have $C_{ij}(\A)=\A$ for all $i<j$. Unfortunately, it seems hard to make good use of this assumption. One simple consequence is that we can write down the best set system under the extra condition that each layer (except the top and bottom) contains exactly two sets (that is, $|\A\cap X^{(r)}|=2$ for all $1\leq r\leq n-1$). An easy calculation shows that the tower of 2 and 3 dimensional cubes of the same size contains more maximal chains. This is perhaps weak evidence that tower of cubes constructions are plausible candidates for extremal systems.

We turn now to a connection with posets. Given a poset $P$ on $X$, there is a natural way of identifying a linear extension of $P$ with a maximal chain in $\ps$. The collection of all down-sets in $P$ gives a family in $\ps$ which contains exactly those maximal chains that correspond to linear extensions of $P$. Since down-sets and antichains in a poset are in one-to-one correspondence this leads to the following question.

\begin{q}\label{poset}
What is the maximum number of linear extensions of a poset on $X$ that contains at most $m$ antichains?
\end{q} 

Clearly not all subsets of $\ps$ arise from posets in this way, but the towers of cubes and their generalisations described earlier do. It follows that a proof of Conjectures \ref{tower:conj} and \ref{gen-tower:conj} (that the `tower of cubes' construction is extremal for maximal chains in the cube) would answer Question \ref{poset} for appropriate values of $m$.

Finally, we mention a generalisation of Question \ref{chain:q} to grids. Since $\ps$ can be regarded as $\{0,1\}^n$, it is natural to ask analogous questions for more general products. A \emph{maximal chain} in $[k]^n=\{1,\dots,k\}^n$ consists of a sequence of $n(k-1)+1$ points of $[k]^n$ such that each point is obtained from its predecessor by adding 1 to one coordinate. Note that these maximal chains can also be regarded as shortest paths betwen $(1,\dots,1)$ and $(k,\dots,k)$ in the grid graph, just as maximal chains in $\ps$ correspond to shortest paths between $(0,\dots,0)$ and $(1,\dots,1)$ in the hypercube graph. Contrasting with the $\ps$ case where $k$ is fixed and $n$ is large, it is natural to consider the case of fixed dimension and large $k$. This seems to be an interesting question even for $n=2$.

\begin{q}
For given $|\A|$, which $\A\subseteq[k]^2$ contains the most maximal chains?
\end{q}


\begin{thebibliography}{}
\bibitem{AF} N. Alon and P. Frankl, The maximum number of disjoint pairs in a family of subsets, Graphs and Combinatorics 1(1985), 13-21
\bibitem{BB} B. Bollob\'as, Combinatorics, Cambridge Univ. Press, London/New York, 1986
\bibitem{DGJS} A.P. Dove, J.R. Griggs, R.J. Kang, J-S. Sereni, Supersaturation in the Boolean lattice, Preprint available at 
{\tt http://arxiv.org/abs/1303.4336}
\bibitem{KKK} G. O. H. Katona, G. Y. Katona, and Z. Katona, Most probably intersecting families of subsets, Combin. Probab. Comput. 21 (2012), 219--227
\bibitem{K} D. Kleitman, A conjecture of Erd˝os-Katona on commensurable pairs of subsets of an n-set, in Theory of Graphs, Proceedings of the Colloquium Held at Tihany, Hungary, September 1966 (P. Erd\H os and G. Katona, eds.), Academic Press, New York, 1968, pp.\ 215--218
\bibitem{R} P. A. Russell, Compressions and Probably Intersecting Families, Combin. Probab. Comput. 21 (2012), 301--313
\end{thebibliography}
\end{document}